\def\multiset#1#2{\ensuremath{\left(\kern-.3em\left(\genfrac{}{}{0pt}{}{#1}{#2}\right)\kern-.3em\right)}}
\theoremstyle{plain}
\newtheorem{thm}{Theorem}
\newtheorem{lemma}[thm]{Lemma}
\newtheorem{cor}[thm]{Corollary}
\theoremstyle{definition}
\newtheorem{defn}[thm]{Definition}
\theoremstyle{remark}
\newtheorem{rem}[thm]{Remark}
\newtheorem*{ex}{Example}
\numberwithin{equation}{section}
\numberwithin{thm}{section}
\begin{document}
\title{MARTINGALES}
\bigskip
\author{ROHAN SHAH}
\bigskip
\date{\today}
\address{Milton Academy, Milton, MA, 02186}
\email{rohan14shah@gmail.com}
\maketitle
\begin{center}
Abstract. This paper covers martingales with introduction to measure theory concepts, and other concepts, including the Lebesgue Integration and Conditional Expectation. It follows up with proofs on Kolmogorov’s Theorem on conditional expectations, the Martingale Property, and the Pythagorean Theorem on Martingales. Finally, it ends with martingales' application in finance. 
\end{center}
\bigskip
\renewcommand*\contentsname{\textbf{CONTENTS}}
\tableofcontents
\bigskip
\section{INTRODUCTION}
Let's consider a bettor participating in a game involving the flipping of a fair coin. In this game, the bettor earns \$1 for a heads outcome and loses \$1 for a tails outcome. This means that if the better flips the coin once and it lands on tails, they would lose a dollar. Conversely, if it lands on heads, they would gain a dollar.

Over many run-throughs of flipping the coin, the bettor’s total earnings, represented as T, could be either positive or negative, depending on the outcomes of the flips. Each individual coin flip carries an equal probability of 50\%, meaning there is an equal chance that the next flip will result in either heads or tails. Consequently, with each flip, the bettor’s total earnings will either increase by \$1 (if heads) or decrease by \$1 (if tails).

After many flips, the bettor’s total earnings will result from random fluctuations due to the equal probability of heads or tails. Over many flips, the total earnings T will show a random pattern of gains and losses, yet the expectation of total earnings remains zero. This means that at one point, despite the variances in time, basic probability states that the final value will be 0. This is an example of a fair game, where the expected earnings after the next turn would equal the current earnings. 
A martingale shares a similar idea. Introduced by Paul Lévy in 1934 and later by Ville in 1939, a martingale is a probability model based on a fair game. This property can be translated to concepts outside regular games, including the stock market. In simple terms, a martingale is a process where the conditional expectation of the next value is equal to the present value. Since there are no predictable trends in a fair game, a martingale can effectively visualize the randomness and fairness of such games. This paper covers the mathematical definition of martingale with the requirement of knowledge of elementary probability, specifically measure theory ($\sigma$-Algebra, measurable spaces, Lebesgue measure, Borel sets). 

Before we begin, I will introduce the notation for a martingale and explain each part throughout the paper.

\begin{defn}
    $X$ is a martingale if
    $$
    X_n = E[X_{n+1}|\mathcal{F}_n]
    $$
\end{defn}

\section{PROBABILITY AND MEASURE THEORY}
We begin with definitions regarding the basics of measure theory, including sets and events. We are given a sample space denoted by $\Omega$ as the set of all possible outcomes of a random experiment. An event is a subset of the sample space $\Omega$. For instance, if $\Omega$ represents the possible outcomes of a dice roll, then (2,4,6) could be an event representing the roll of an even number. We begin by looking at the first approximation, the power set of omega, $P(\Omega)$ which equals the set of all subsets of $\Omega$. To restrict this power set, we introduce our first definition. 
\begin{defn}
   A system $A \subseteq P(X)$, where A is a collection of elements of subsets of X. This collection is called $\sigma$-algebra. A $\sigma$-algebra is a collection of subsets of the sample space that includes the empty set and the sample space itself. However, to be named a $\sigma$-algebra (an important measure theory definition), it must follow these three rules:
\begin{itemize}
    \item $\varnothing, X \subseteq P(X)$
    \item If $B \in A$ then $B^C = X/ B \in A$. This means that for any set in the $\sigma$-algebra, its complement is also in the $\sigma$-algebra.
    \item $B,D \in A \to B\cup D \in A$. Additionally, if the set B and D are in set, A, the intersection of set B and D will also be in A. 
    \item Then $(X,A)$ is a measurable space.
\end{itemize} 
\end{defn}
Measurable spaces are essential to provide a framework in which measures are defined. In the definition above, $(X,A)$ is a measurable space with set X and the collection of subsets, the sigma-algebra, A. The sigma-algebra includes the subsets of X, that are considered measurable.
\begin{defn}\ 
    \begin{itemize}
    \item For $M \subseteq P(X)$, there is a smallest sigma algebra that contains M: 
    \begin{center}
        $\sigma(M) =: \bigcap_{\substack{A \supseteq M}} |$ $A$ is a $\sigma$-algebra generated by set M.
    \end{center}
    \item E.g. $X=\{a, b, c, d\}, M=\{\{a\},\{b\}\}$
    \item Therefore, $\sigma(\mathcal{M})=\left\{\begin{array}{r}\phi, X,\{a\},\{b\},\{a, b\}, \{b, c, d\},\{a, c, d\},\{c, d\}\}
\end{array}\right.$
\end{itemize}
\end{defn}
By defining $\sigma(M)$ as the intersection of all sigma-algebras containing $M$, we guarantee that $(\sigma(M))$ includes all necessary sets to be a sigma-algebra while being minimal in the sense that it does not include any extraneous sets not required by the sigma-algebra properties and the inclusion of $M$.
\begin{defn}
    Let A be an algebra, $\mu: A \to [0,\infty]$ is a finitely additive measure on A, if 
    \begin{itemize}
    \item For any two sets $B,D \in A$ with $B \cap D = \varnothing: \mu(B\cup D) = \mu(B) + \mu(D)$. 
    \end{itemize} 
\end{defn}
\noindent
A finitely additive measure is a function that assigns a non-negative value to each set, representing its size, in a way that the measure of the union of two non-overlapping sets equals the sum of their measures. This property ensures consistent and additive measurements, allowing us to determine the size of complex shapes by summing the measures of their non-overlapping parts.
\begin{proof}
    \textit{Using Definition 2.1 and 2.3: }
    \begin{itemize}
    \item Then $A_1, A_2, A_3 \cdots A_n \in A: A_1 \cup A_2 \cup \cdots A_n \in A$
    \item If $A_i \cup A_j = \varnothing 1 < i \neq j < n$ , then $\mu \bigcup_{i = 1}^ n A_i$ = $\sum_{i=1}^n \mu (A_i)$. 
    \end{itemize}
\end{proof}
    
\noindent
The proof shows that any finite union of sets in a sigma-algebra $A$ is also in $\mathcal{A}$. It then uses the countable additivity property of measures to show that the measure of a finite union of disjoint sets is the sum of the measures of the individual sets.

\begin{defn}
    A measure $\mu$ on an algebra, A is a set function from  $A \to [0,\infty]$ if it follows the following properties:
    \begin{itemize}
    \item If we take any sequence of sets: $A_1, A_2, A_3 \cdots A_n \in A$ such that $\cup_{i = 1}^ n A_i$ and $A_i \cap A_j = \varnothing$ if $i\neq j$ then $\mu (\cup_i A_i) = \sum_i \mu(A_i)$.
\end{itemize}
\end{defn}
\noindent
Note that this property is denoted as $\sigma$-additivity. However if the set function, A is in the interval $[0,1]$, then these properties won't apply as it is a probability measure rather than a measure. Now, we can define what a probability measure is.
\begin{defn}
    A probability measure (P) on the sample space satisfies $P(\Omega)=1$. Therefore, the triple $(\Omega, \mathcal{F}, P)$ is called a probability space, where $\mathcal{F}$ is the sigma algebra and $P$ is the probability measure.
\end{defn} 
\noindent
With these terms defined, we can introduce what random variables are. 
\begin{defn}
   Given a measurable space $(\Omega, \mathcal{F})$, a function $X: \Omega \to \mathbb{R}$ is said to be a measurable function or a random variable if for every Borel Set, B $\in \mathbb{B}(\mathbb{R})$, the pre-image of that set, $X^{-1} (B) \in \mathcal{F}$. This implies that the function $X$ maps events in the sample space $\Omega$ to real numbers in a way that preserves the structure of $\sigma$-algebra. 
\end{defn}
\begin{defn}
    A random variable is a measurable function from the sample space to the real numbers. The distribution of a random variable $X$ is a function $X: B\to[0,1]$ such that $X(B)=P(X^{-1}(B))$ for all Borel sets $B$. Note that Borel sets (subset) are important as a way of showing that if open sets are measurable, then so are all other sets of interest (like closed sets, countable unions, and intersections).
\end{defn}

\section{LEBESGUE INTEGRATION}

If X, a integrable random variable takes on an infinite number of values, calculating the weighted average requires integration to handle this infinite sum. However, the standard Riemann integral is often unsuitable in this context. The Riemann integral approximates the integral by splitting the domain into contiguous intervals to form "rectangles," then summing the areas of these rectangles. This approach might not make sense for an abstract sample space, and some measurable functions may not meet the continuity requirements for Riemann integration. Therefore, we use a more generalized form of the Riemann integral called the Lebesgue integral. However, before we define the integral, we need understand simple functions. 
\begin{defn}
    Let $A$ be a subset of $\Omega$. The indicator function of $A$ is the function $\mathbf{1}_A: \Omega \rightarrow \mathbb{R}$ defined as:
$$
\mathbf{1}_A(\omega)=\left\{\begin{array}{lll}
1 & \text { if } & \omega \in A \\
0 & \text { if } & \omega \notin A
\end{array} .\right.
$$
\end{defn}
\begin{thm}
   The indicator function of $A$ is measurable in every $\sigma$-algebra $\mathcal{F}$ containing $A$.
  
\end{thm}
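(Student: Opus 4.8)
The plan is to verify the definition of measurability from Definition 2.6 directly: writing $f = \mathbf{1}_A$, I must show that $f^{-1}(B) \in \mathcal{F}$ for every Borel set $B \in \mathbb{B}(\mathbb{R})$. The crucial simplification is that $\mathbf{1}_A$ is a two-valued function, taking only the values $0$ and $1$. Consequently the preimage $f^{-1}(B)$ cannot be an arbitrary subset of $\Omega$; it is determined entirely by the two yes/no questions of whether $0 \in B$ and whether $1 \in B$. This collapses the uncountable family of Borel sets into just four cases, so that no genuine use of the Borel structure of $\mathbb{R}$ is required.

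First I would record the four possibilities. For a Borel set $B$,
$$
f^{-1}(B) = \begin{cases} \Omega & \text{if } 0 \in B \text{ and } 1 \in B, \\ \Omega \setminus A & \text{if } 0 \in B \text{ and } 1 \notin B, \\ A & \text{if } 0 \notin B \text{ and } 1 \in B, \\ \varnothing & \text{if } 0 \notin B \text{ and } 1 \notin B. \end{cases}
$$
Each line follows immediately from the defining formula for $\mathbf{1}_A$ in Definition 3.1: the level set $\{\omega : \mathbf{1}_A(\omega) = 1\}$ is exactly $A$, while $\{\omega : \mathbf{1}_A(\omega) = 0\}$ is its complement $\Omega \setminus A$, and $f^{-1}(B)$ is the union of whichever of these two level sets corresponds to a value lying in $B$.

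It then remains to check that each of the four possible preimages belongs to $\mathcal{F}$. By the $\sigma$-algebra axioms of Definition 2.1, both $\varnothing$ and $\Omega$ always lie in $\mathcal{F}$; the set $A$ lies in $\mathcal{F}$ by the standing hypothesis that $\mathcal{F}$ contains $A$; and the complement $\Omega \setminus A$ lies in $\mathcal{F}$ because a $\sigma$-algebra is closed under complementation. Hence $f^{-1}(B) \in \mathcal{F}$ in every case, and $\mathbf{1}_A$ is measurable. I do not expect any serious obstacle here: the only step requiring genuine attention is the initial reduction---recognizing that a two-valued function need only be tested on finitely many cases rather than on all Borel sets---after which the conclusion is forced purely by the closure properties of $\mathcal{F}$.
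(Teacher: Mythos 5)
Your proof is correct and follows essentially the same route as the paper's: both split the Borel sets into the same four cases according to whether $0$ and $1$ lie in $B$, identify the four possible preimages as $\Omega$, $\Omega \setminus A$, $A$, and $\varnothing$, and conclude from the closure properties of the $\sigma$-algebra. Your write-up is somewhat more explicit about why the reduction to four cases is legitimate, but the argument is the same.
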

\begin{proof}
    Every possible Borel set $B$ falls into one of four categories:
    \begin{itemize}
        \item $1 \in B, 0 \notin B$. Then $\mathbf{1}_A^{-1}(B)=A$;
        \item $0 \in B, 1 \notin B$. Then $\mathbf{1}_A^{-1}(B)=A^c$;
        \item $0,1 \in B$. Then $\mathbf{1}_A^{-1}(B)=\Omega$;
        \item $0,1 \notin B$. Then $\mathbf{1}_A^{-1}(B)=\varnothing$.
    \end{itemize}  
By definition, if $A \in \mathcal{F}$, then $\Omega, \varnothing, A^c \in \mathcal{F}$. Thus, by Definition $3.1, \mathbf{1}_A$ is measurable.
\end{proof}

\begin{defn}
    Simple functions are measurable functions that take on a finite number of values. They can be written as: $$g = \sum_{i=1}^n a_i \mathcal{X}A_i$$ 
\end{defn}

In this notation above, $a_i$ are real numbers, $A_i$ are measurable sets, and ${}_{\mathcal{X}A_i}$.

\begin{proof}
    If we integrate the following simple function using the linearity property:\\
    1. Define the Simple Function:
    $$
    g=\sum_{i=1}^n a_i \mathcal{X}_{A_i}
    $$
    2. Set Up the Integral:
    To integrate $g$ with respect to a measure $\mu$, we use the definition:
    $$
    \int g d \mu=\int \sum_{i=1}^n a_i \mathcal{X}_{A_i} d \mu
    $$
    3. Use Linearity of the Integral:
    By the linearity property of the integral, we can move the summation outside the integral:
    $$
    \int \sum_{i=1}^n a_i \mathcal{X}_{A_i} d \mu=\sum_{i=1}^n a_i \int \mathcal{X}_{A_i} d \mu
    $$
    4. Evaluate Each Integral:
    The integral of the characteristic function $\mathcal{X}_{A_i}$ of a set $A_i$ with respect to $\mu$ is simply the measure of $A_i$ :
    $$
    \int \mathcal{X}_{A_i} d \mu=\mu\left(A_i\right)
    $$
    5. Combine the Results:
    Substitute the measure of each $A_i$ into the sum:
    $$
    \int g d \mu=\sum_{i=1}^n a_i \mu\left(A_i\right)
    $$

    So, the integral of the simple function $g$ with respect to the measure $\mu$ is:
    $$\int \mathrm{gd} \mu=\sum_{\mathrm{i}=1}^{\mathrm{n}} a_i \mu(A i), \text { where } \mu \text { is a measure on } \Omega $$.

\end{proof}
\noindent
\textbf{Properties of Lebesgue Integral:}
\begin{itemize}
    \item Linearity: $\int(\mathrm{af}+\mathrm{bg}) \mathrm{d} \mu=\mathrm{a} \int \mathrm{fd} \mu+\mathrm{b} \int \mathrm{gd} \mu$
    \item Monotonicity: If $\mathrm{f} \leq \mathrm{g}$, then $\int \mathrm{fd} \mu \leq \int \mathrm{gd} \mu$
    \item Countable Additivity: $\int \sum_{i=1}^{\infty} f_i \mathrm{~d} \mu=\sum_{i=1}^{\infty} \int f_i \mathrm{~d} \mu$
\end{itemize}
\bigskip
\noindent
The utility of the Lebesgue integral becomes clearer with this definition. The Lebesgue integral of a simple function is simply the sum of its possible values weighted by the measure of the subsets of the domain corresponding to those values. In the context of a probability measure, this is equivalent to averaging the possible values of a random variable weighted by the probability of the events that produce those values—that is, the expectation. For more complex, non-simple functions, the integral can be defined by approximating the function with simple functions. This leads to the following theorem, which facilitates this process.

\begin{thm}
     Let $f: \Omega \rightarrow[0, \infty]$ be a measurable function on the space $(\Omega, \mathcal{F}, \mu)$. Then, there exists an increasing sequence of simple functions $\left(x_i\right)_{i=1}^{\infty}$ that converges point-wise to $f$.

\end{thm}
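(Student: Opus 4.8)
The plan is to construct the approximating sequence explicitly by partitioning the range $[0,\infty]$ into dyadic intervals and exploiting the measurability of $f$ to ensure the pieces are measurable. For each $n \in \mathbb{N}$ I would set
$$
x_n = \sum_{k=0}^{n2^n - 1} \frac{k}{2^n}\,\mathbf{1}_{E_{n,k}} + n\,\mathbf{1}_{\{f \ge n\}},
\qquad
E_{n,k} = f^{-1}\!\left[\frac{k}{2^n}, \frac{k+1}{2^n}\right).
$$
Intuitively, $x_n$ truncates $f$ at height $n$ and then rounds the value down to the nearest multiple of $2^{-n}$.

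First I would confirm that each $x_n$ is a simple function in the sense of Definition 3.3. Because $f$ is measurable and each half-open interval $[k/2^n, (k+1)/2^n)$ is Borel, the preimage $E_{n,k}$ lies in $\mathcal{F}$; similarly $\{f \ge n\} = f^{-1}([n,\infty])$ is measurable. Thus $x_n$ is a finite linear combination of indicator functions of sets in $\mathcal{F}$, so it takes only finitely many values and is simple.

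Next I would prove monotonicity, $x_n \le x_{n+1}$ pointwise. The essential observation is that each level-$n$ dyadic interval $[k/2^n, (k+1)/2^n)$ is the disjoint union of the two level-$(n+1)$ intervals $[2k/2^{n+1}, (2k+1)/2^{n+1})$ and $[(2k+1)/2^{n+1}, (2k+2)/2^{n+1})$. On the lower half the two approximations coincide, while on the upper half $x_{n+1}$ exceeds $x_n$ by exactly $2^{-(n+1)}$, so refining the partition never lowers the value. On $\{f \ge n\}$ the truncation level rises from $n$ to $n+1$, which again can only increase the value. Handling all cases carefully --- in particular the points sitting exactly at the truncation threshold $f = n$ --- is the step that demands the most bookkeeping, and it is where I expect the main difficulty to lie.

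Finally I would verify pointwise convergence. Fix $\omega \in \Omega$. If $f(\omega) < \infty$, then for all $n > f(\omega)$ the point escapes truncation and falls into some $E_{n,k}$, yielding $0 \le f(\omega) - x_n(\omega) < 2^{-n} \to 0$. If instead $f(\omega) = \infty$, then $x_n(\omega) = n \to \infty = f(\omega)$. In either case $x_n(\omega) \to f(\omega)$, so the increasing sequence $(x_n)$ converges pointwise to $f$, as required. Once monotonicity is secured, this convergence follows immediately from the uniform $2^{-n}$ approximation bound on the untruncated region.
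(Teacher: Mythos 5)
Your construction is correct, and in fact the paper offers no proof of this theorem at all --- it is stated as the ``Simple Function Approximation Theorem'' and immediately moved past --- so your argument supplies something the paper omits rather than paralleling it. The dyadic truncate-and-round construction you give is the standard one, and all three verifications (simplicity, monotonicity, pointwise convergence) are sound. The monotonicity step you flag as the main source of bookkeeping is less delicate than you fear: for $f(\omega)<n$ both approximants are floor functions, $x_n(\omega)=\lfloor 2^n f(\omega)\rfloor/2^n$ and $x_{n+1}(\omega)=\lfloor 2^{n+1} f(\omega)\rfloor/2^{n+1}$, and the single inequality $\lfloor 2^{n+1}t\rfloor \ge 2\lfloor 2^n t\rfloor$ disposes of that case; for $n\le f(\omega)<n+1$ you get $x_n(\omega)=n$ while $x_{n+1}(\omega)=\lfloor 2^{n+1}f(\omega)\rfloor/2^{n+1}\ge n$; and for $f(\omega)\ge n+1$ the values are $n$ and $n+1$. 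The convergence argument, including the $f(\omega)=\infty$ case, is complete as written.
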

\noindent
This is known as the Simple Function Approximation Theorem. As we covered a little about expectation, we can know understand both that and conditional expectation. 

\section{CONDITIONAL EXPECTATION}
Conditional expectation, denoted $E[Y|X]$, represents the expected value of a random variable $Y$ given the knowledge of another variable $X$. It is used to update our expectation of $Y$ based on the additional information provided by $X$. To understand this better, we begin by defining simple random variables, then move to expectation. 
\begin{defn}
   A random variable, X is simple if there exists $n>0, X_1, X_2, \cdots, X_n \in \mathbb{R}$, $A_1, A_2, \ldots, A_n \in \mathcal{F}:$
$$
X=\sum_{Xi=1}^n X_{\varepsilon} \mathbbm{1}_{A_{\varepsilon}}
$$ 
\end{defn}
\noindent
This means that X is a function from $\Omega \to \mathbb{R}$. $A_{\varepsilon}$ are sets in the $\sigma$-algebra, meaning that they are subsets in $\Omega$.
\begin{thm}
    For every random variable X, there exist $X_1, X_2, \cdots$ that are known as simple random variables such that:
    $$
    |X_n| \leq |X|
    $$
    For every $\omega \in$ sample space, $\Omega$. 
    $$X_n(w) \vec{n}_\infty X(w)$$
\end{thm}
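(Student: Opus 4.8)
The plan is to reduce the statement to the non-negative case already settled by the Simple Function Approximation Theorem (Theorem 3.4) and then reassemble the pieces. First I would split an arbitrary random variable $X \colon \Omega \to \mathbb{R}$ into its positive and negative parts,
$$
X^+ = \max(X,0), \qquad X^- = \max(-X,0),
$$
so that $X = X^+ - X^-$ and $|X| = X^+ + X^-$ pointwise. Since $X$ is measurable, both $X^+$ and $X^-$ are non-negative measurable functions on $(\Omega,\mathcal{F},\mu)$, which is exactly the hypothesis of Theorem 3.4.

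Next I would apply Theorem 3.4 twice to obtain increasing sequences of simple functions $(f_n)_{n=1}^\infty$ and $(g_n)_{n=1}^\infty$ converging pointwise to $X^+$ and $X^-$ respectively. Because each sequence increases to its own limit, I automatically get the one-sided bounds $0 \le f_n \le X^+$ and $0 \le g_n \le X^-$ for every $n$ — this extraction is the small but essential point where the word \emph{increasing} in Theorem 3.4 is used. I then define the candidate approximants by
$$
X_n = f_n - g_n.
$$
Each $X_n$ is a difference of simple functions, hence simple, and at every $\omega$ we have $X_n(\omega) = f_n(\omega) - g_n(\omega) \to X^+(\omega) - X^-(\omega) = X(\omega)$, which establishes the required pointwise convergence $X_n(\omega) \to X(\omega)$ for all $\omega \in \Omega$.

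The remaining claim is the domination bound $|X_n| \le |X|$. Here I would argue pointwise using the triangle inequality together with the one-sided bounds above:
$$
|X_n| = |f_n - g_n| \le f_n + g_n \le X^+ + X^- = |X|.
$$
For a sharper reading one can note that $X^+$ and $X^-$ have disjoint supports, so at any fixed $\omega$ at least one of $f_n(\omega), g_n(\omega)$ vanishes, meaning $|X_n(\omega)|$ equals exactly $f_n(\omega)$ or $g_n(\omega)$; either way it is trimmed below $|X(\omega)|$. I expect this bound to be the only step requiring genuine care, and it is mild: the real content lives inside Theorem 3.4, which we are free to assume, so the signed case follows by decomposition and the inequality $f_n \le X^+$, $g_n \le X^-$. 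If desired, the same monotonicity could be leveraged to make $(|X_n|)_n$ itself nondecreasing, but the statement asks only for the pointwise bound and pointwise limit, so I would stop once those two facts are verified.
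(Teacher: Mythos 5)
The paper states this theorem without giving any proof at all (it is immediately followed by Theorem 4.3 and then the definitions of expectation), so there is no argument of the author's to compare yours against; you are filling a genuine gap rather than paralleling an existing proof. Your argument is the standard one and is essentially correct: decompose $X = X^+ - X^-$, approximate each part by an increasing sequence of simple functions via Theorem 3.4, set $X_n = f_n - g_n$, and conclude pointwise convergence and the bound $|X_n| \le f_n + g_n \le X^+ + X^- = |X|$. The one point to tighten is your claim that the bounds $0 \le f_n$ and $0 \le g_n$ come ``automatically'': Theorem 3.4 as stated only promises an increasing sequence of simple functions converging to a non-negative limit, which gives $f_n \le X^+$ but not literally $f_n \ge 0$ (an increasing sequence can start negative). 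Without $f_n, g_n \ge 0$ the step $|f_n - g_n| \le f_n + g_n$ and your disjoint-support remark both break down. The fix is trivial --- replace $f_n$ by $\max(f_n, 0)$, which is still simple, still increasing, and still converges to $X^+$ --- but it should be said explicitly, since that non-negativity is exactly what the domination bound rests on.
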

\begin{thm}
    If $X(\omega) \geq 0$ for all $\omega \in \Omega$, then $X_n(\omega)$ can chosen to be non-decreasing in $n$ for every $\omega$.
\end{thm}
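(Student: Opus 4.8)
The plan is to forgo an abstract existence argument and instead write down a single explicit approximating sequence that is manifestly non-decreasing, then verify the required properties one by one. For each integer $n \ge 1$ I would define the \emph{dyadic staircase}
$$
X_n(\omega) \;=\; \min\!\left\{\, n,\; \frac{1}{2^n}\bigl\lfloor 2^n X(\omega)\bigr\rfloor \,\right\},
$$
which rounds the value $X(\omega)$ down to the nearest integer multiple of $2^{-n}$ and then truncates the result at the height $n$. This is the standard non-negative specialization of the approximation behind Theorem 4.2, so reusing it costs nothing while making the extra monotonicity visible.

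First I would confirm that each $X_n$ is a simple random variable in the sense of Definition 4.1. The function $X_n$ assumes only the finitely many values $0, 2^{-n}, 2\cdot 2^{-n}, \dots, n$, and the set on which $X_n = k 2^{-n}$ (for $k < n 2^n$) equals $\{\omega : k 2^{-n} \le X(\omega) < (k+1)2^{-n}\}$, with the top value occurring on $\{\omega : X(\omega) \ge n\}$. Each of these is the preimage under $X$ of an interval, hence lies in $\mathcal{F}$ by the measurability criterion of Definition 2.6; writing $X_n$ as the corresponding finite sum of scaled indicators exhibits it as a simple function. Along the way I would record the bound $0 \le X_n(\omega) \le X(\omega)$, which immediately gives $|X_n| \le |X|$ because $X \ge 0$.

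The heart of the argument, and the step I expect to be the main obstacle, is the monotonicity $X_n(\omega) \le X_{n+1}(\omega)$ for every $\omega$. Everything reduces to the elementary floor inequality $\lfloor 2t \rfloor \ge 2\lfloor t \rfloor$, valid for all real $t \ge 0$; applied with $t = 2^n X(\omega)$ it yields $2^{-(n+1)}\bigl\lfloor 2^{n+1} X(\omega)\bigr\rfloor \ge 2^{-n}\bigl\lfloor 2^n X(\omega)\bigr\rfloor$, so halving the mesh can only raise the staircase. The very reason I refine by a factor of two at each stage, rather than arbitrarily, is to force the new partition to refine the old one so this inequality applies. The one subtlety is the interaction with the cap, which I would handle by cases: when $X(\omega) < n$ neither term is truncated and the floor inequality finishes the job, while when $X(\omega) \ge n$ one has $X_n(\omega) = n$ and a direct check gives $X_{n+1}(\omega) \ge n$. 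The boundary band $n \le X(\omega) < n+1$ is where the bookkeeping is easiest to mishandle, so I would treat it explicitly.

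Finally I would establish pointwise convergence. For a fixed $\omega$ (so that $X(\omega)$ is finite, as it must be for a real-valued random variable), once $n > X(\omega)$ the cap is inactive and the staircase obeys $0 \le X(\omega) - X_n(\omega) \le 2^{-n} \to 0$; if one works instead with extended-real-valued $X$ and $X(\omega) = \infty$, then $X_n(\omega) = n \to \infty$. Combining this with the preceding paragraph shows that $(X_n)$ is non-decreasing, dominated by $X$, and convergent to $X$ everywhere, which is exactly the claim, and at the same time recovers Theorem 3.4 in the present probabilistic setting.
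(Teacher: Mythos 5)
Your proposal is correct and complete. There is, however, nothing in the paper to compare it against: the paper states this theorem without any proof, following it only with an informal gloss on what \emph{non-decreasing in $n$} means, so your explicit dyadic construction $X_n(\omega)=\min\bigl\{n,\,2^{-n}\lfloor 2^n X(\omega)\rfloor\bigr\}$ fills a gap rather than duplicating an existing argument. Every step you outline checks out: the level sets $\{k2^{-n}\le X<(k+1)2^{-n}\}$ and $\{X\ge n\}$ are preimages of Borel sets under $X$, so each $X_n$ is simple and satisfies $0\le X_n\le X$; the floor inequality $\lfloor 2t\rfloor\ge 2\lfloor t\rfloor$ gives monotonicity away from the cap; on $\{X\ge n\}$ one has $X_n=n$ while $X_{n+1}\ge n$, so the truncation never destroys monotonicity; and the bound $0\le X(\omega)-X_n(\omega)<2^{-n}$ once $n>X(\omega)$ gives pointwise convergence. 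This is the standard textbook construction, and it simultaneously furnishes, in the nonnegative case, the approximating sequence that Theorems 3.4 and 4.2 also assert without proof.
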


\noindent
"Non-decreasing for n" means that as the sequence progresses, the values do not decrease for any outcome in the sample space. In other words, the values either stay the same or increase as the index increases, ensuring the sequence never gets smaller for each outcome.
\bigskip

\noindent
After these definitions of random variables, we can now define the expectation of a random variable.

\begin{defn}
    If $X=\sum_{Xi=1}^n X_{\varepsilon} \mathbbm{1}_{A_{\varepsilon}}$ is simple, then $\mathbb{E}X$ (expected value) $=\sum_{Xi=1}^n X_{\varepsilon} \mathbb{P}(A_{\varepsilon})$
\end{defn}

\noindent
This is Step 1, only applying if the random variable, X is a simple, following the properties defined in Definition 3.1. Now, we will discuss Step 2 when $X \geq 0$ (not negative).

\begin{defn}
    If $X \geq 0$ and a random variable, then $X_1, X_2, \ldots$ is a simple random variable such that $X_i$ converges to $X$. 
    $$\mathbb{E}X = lim_{X \to i} \mathbb{E}X_i \in [0,\infty]
    $$
\end{defn}
\begin{defn}
    If $X$ is a random variable, then $X := X^+ - X^- $
\end{defn}

\noindent
This second part of the definition is stating that the positive part of the variable, $X^+$ plus the negative part of the variable, $X^-$ equals to the variable itself. For instant the positive part of 4 is 4, but the negative part of 4 is 0. Adding these two numbers, we get the number 4 itself. This may seem intuitive, but if we do this for every $\omega$, we can define a function, where both the positive and negative part of the variable is not negative. If we take the expectation of this equation we get the following:
$$
\mathbb{E}X := \mathbb{E}X^+ - \mathbb{E}X^-$$ unless 
$$ \mathbb{E}X^+ = \infty = \mathbb{E}X^-
$$
Making $\mathbb{E}[X]$ not exist, which is known as the Cauchy random variable.\\
\noindent
Now we move to defining what conditional expectation is. 
\begin{thm}
    Kolmogorov's Theorem on conditional expectations.\\
    Let $(\Omega, \mathcal{F}, \mathbb{P})$ be a probability space, $X$ is an integrable random variable; $\mathbb{E}|X| < \infty$. Let $\mathcal{G} \subseteq \mathcal{F}$ be a sub sigma-algebra. Take random variable $Y$ such that it follows these properties: 
    \begin{itemize}
        \item $Y$ is $\mathcal{G}$-measurable. 
        \item $\mathbb{E}|Y| < \infty$
        \item $\forall A \in \mathcal{G}$
        \item $\int_A X dP = \int_A Y dP = \int_A \mathbb{E}(X|\mathcal{G}) dP$
    \end{itemize}
    The random variable $Y$ is denoted by $\mathbb{E}(X|\mathcal{G})$ and is called the conditional expectation of $X$ given $\mathcal{G}$. 
\end{thm}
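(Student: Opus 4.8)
The plan is to realize $Y$ as a Radon--Nikodym derivative. First I would reduce to the case $X \geq 0$: writing $X = X^+ - X^-$ as in Definition 4.4, both $X^+$ and $X^-$ are nonnegative and integrable, and if the theorem holds for each of them, then $Y := \mathbb{E}(X^+ \mid \mathcal{G}) - \mathbb{E}(X^- \mid \mathcal{G})$ inherits $\mathcal{G}$-measurability, integrability, and the averaging identity by the linearity of the integral. So I may assume $X \geq 0$.

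Next I would define a set function on the sub-$\sigma$-algebra $\mathcal{G}$ by
$$\nu(A) = \int_A X \, d\mathbb{P}, \qquad A \in \mathcal{G}.$$
By the countable additivity of the Lebesgue integral listed among its properties, $\nu$ is a genuine measure on $(\Omega, \mathcal{G})$, and it is finite since $\nu(\Omega) = \mathbb{E}|X| < \infty$. The key observation is that $\nu$ is absolutely continuous with respect to the restriction $\mathbb{P}|_{\mathcal{G}}$: if $A \in \mathcal{G}$ satisfies $\mathbb{P}(A) = 0$, then $\int_A X \, d\mathbb{P} = 0$, so $\nu(A) = 0$.

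The heart of the argument is then the Radon--Nikodym theorem, applied on the measure space $(\Omega, \mathcal{G}, \mathbb{P}|_{\mathcal{G}})$: since $\nu \ll \mathbb{P}|_{\mathcal{G}}$ and both measures are finite, there is a $\mathcal{G}$-measurable function $Y \geq 0$ with $\nu(A) = \int_A Y \, d\mathbb{P}$ for every $A \in \mathcal{G}$. Because the derivative is constructed over $\mathcal{G}$, it is automatically $\mathcal{G}$-measurable, which is the first required property; taking $A = \Omega$ yields $\int_\Omega Y \, d\mathbb{P} = \mathbb{E}|X| < \infty$, the second; and by construction $\int_A X \, d\mathbb{P} = \int_A Y \, d\mathbb{P}$ for all $A \in \mathcal{G}$, the third. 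This existence step is the main obstacle, since Radon--Nikodym is the one genuinely nontrivial input and must either be cited or developed separately.

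Finally I would establish uniqueness up to $\mathbb{P}$-null sets, which is what justifies writing $Y = \mathbb{E}(X \mid \mathcal{G})$. If $Y_1, Y_2$ are two $\mathcal{G}$-measurable integrable functions both satisfying the averaging identity, then $\int_A (Y_1 - Y_2) \, d\mathbb{P} = 0$ for every $A \in \mathcal{G}$. The sets $A_+ = \{Y_1 > Y_2\}$ and $A_- = \{Y_1 < Y_2\}$ lie in $\mathcal{G}$ because $Y_1, Y_2$ are $\mathcal{G}$-measurable; integrating $Y_1 - Y_2$ over $A_+$ forces $\mathbb{P}(A_+) = 0$, and symmetrically $\mathbb{P}(A_-) = 0$, so $Y_1 = Y_2$ almost surely. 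An alternative route when $X \in L^2$ would be to take $Y$ to be the orthogonal projection of $X$ onto the closed subspace of $\mathcal{G}$-measurable square-integrable functions and read off the averaging identity from orthogonality against the indicators $\mathbf{1}_A$, then extend to integrable $X$ by approximation; this path connects directly to the Pythagorean theorem promised in the abstract.
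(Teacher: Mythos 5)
Your proposal is correct and follows essentially the same route as the paper: define $\nu(A)=\int_A X\,d\mathbb{P}$ on $\mathcal{G}$, invoke the Radon--Nikodym theorem to obtain a $\mathcal{G}$-measurable density $Y$, and prove uniqueness by integrating the difference of two candidates over $\{Y_1>Y_2\}$ and $\{Y_1<Y_2\}$. Your write-up is in fact slightly more careful than the paper's, since you first reduce to $X\ge 0$ (so that $\nu$ is a genuine rather than signed measure) and you explicitly verify the absolute continuity $\nu\ll\mathbb{P}|_{\mathcal{G}}$ that the Radon--Nikodym theorem requires, both of which the paper leaves implicit.
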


The best way to explain this is through a proof, where we shall require the use of Radon-Nikodym Theorem. 
\begin{proof}
    Define a new measure, $Q$ on $(\Omega, \mathcal{G})$ by: $$Q(A) = \int_A X dP$$ for all $A \in \mathcal{G}$.\\
    Since $X$ is integrable, $Q$ is a finite measure on $(\Omega, \mathcal{G})$: $$Q(\Omega) = \int_\Omega X dP = \mathbb{E}[X] < \infty$$ \\
    The Radon-Nikodym theorem states that since $Q$ is a sigma-finite measure on $(\Omega, \mathcal{G})$ and $\left.P\right|_{\mathcal{G}}$ is the restriction of the probability measure $P$ to $\mathcal{G}$, there exists a $\mathcal{G}$-measurable function $Y$ such that:
    $$
    Q(A)=\int_A Y d P \quad \text { for all } A \in \mathcal{G}
    $$
    From here, since $A \in \mathcal{G}$: $$\int_A X dP=\int_A Y dP$$\\
    Define $Y=\mathbb{E}[X \mid \mathcal{G}]$. By construction, $Y$ is $\mathcal{G}$-measurable and satisfies:
    $$\int_A X d P=\int_A \mathbb{E}[X \mid \mathcal{G}] d P \quad \text {for all} A \in \mathcal{G}$$\\
    To show uniqueness, suppose $Y^{\prime}$ is another $\mathcal{G}$-measurable function that satisfies the same integral property. Then for all $A \in \mathcal{G}$ :
    $$
    \int_A \mathbb{E}[X \mid \mathcal{G}] d P=\int_A Y^{\prime} d P
    $$\\
    Taking $A=\left\{\mathbb{E}[X \mid \mathcal{G}]>Y^{\prime}\right\}$, we get:
    $$
    \int_{\left\{\mathbb{E}[X \mid \mathcal{G}]>Y^{\prime}\right\}}\left(\mathbb{E}[X \mid \mathcal{G}]-Y^{\prime}\right) d P=0
    $$\\
    Since $\mathbb{E}[X \mid \mathcal{G}]-Y^{\prime} \geq 0$ on $\left\{\mathbb{E}[X \mid \mathcal{G}]>Y^{\prime}\right\}$, this implies $P\left(\left\{\mathbb{E}[X \mid \mathcal{G}]>Y^{\prime}\right\}\right)=$ 0. A similar argument holds for $\left\{\mathbb{E}[X \mid \mathcal{G}]<Y^{\prime}\right\}$. Thus, $\mathbb{E}[X \mid \mathcal{G}]=Y^{\prime}$ almost surely.\\
    The Radon-Nikodym theorem provides a $\mathcal{G}$-measurable function $Y$ that satisfies the integral property, ensuring the existence of the conditional expectation $\mathbb{E}[X \mid \mathcal{G}]$. The uniqueness follows from the fact that any two $\mathcal{G}$-measurable functions satisfying this property must be equal almost surely. Thus, Kolmogorov's theorem on conditional expectations is proved.
\end{proof}

Let's move on to an important property of Conditional Expectation, the tower rule. 

\begin{defn}
    $(\Omega, \mathcal{F}, \mathbb{P})$; $\mathcal{G} \subseteq \mathcal{H} \subseteq \mathcal{F}$ (These are all $\sigma$-algebras). Both $\mathcal{G}$ and $\mathcal{H}$ are sub-sigma-algebras. Let $X$ be a random variable, $\mathbb{E}[X] < \infty$.\\
    Then the Tower rule states: $$\mathbb{E}(\mathbb{E}(X | \mathcal{G}) | \mathcal{H}) = \mathbb{E}[X|\mathcal{G}]$$ 
\end{defn}
\bigskip
\noindent
The Tower Rule essentially tells us that if we first condition on a larger sigma-algebra $\mathcal{G}$ and then on a smaller sigma-algebra $\mathcal{H}$, the result is the same as if we had conditioned on the smaller sigma-algebra $\mathcal{H}$ directly.

\section{MARTINGALES}
\noindent
We are given a probability space $(\Omega, \mathcal{F}, \mathbb{P})$ and a filtration as defined below. 
\begin{defn}
    A filtration $({\mathcal{F}_t})_{t\geq 0}$ is an increasing family of $\sigma$-algebras: $$\mathcal{F}_s \subseteq \mathcal{F}_t \subseteq \mathcal{F} \text{ for } s \leq t$$
\end{defn}
\noindent
From this we can define adaptation. 
\begin{defn}
    A sequence of random variables $\left\{X_n ; n \geq 0\right\}$ is adapted if:$$ X_n \in \mathcal{F}_n$$
\end{defn}
In Adaptation, the data $X_n$ only depends on information until instant $n$.
\begin{defn}
    We consider a sequence of random variables $X=\left\{X_n ; n \geq 0\right\}$ such that
    \begin{itemize}
        \item $\left\{X_n ; n \geq 0\right\}$ is adapted.
        \item $X_n \in L^1(\Omega)$ for all $n \geq 0$.
    \end{itemize} 
    Then
    \begin{itemize}
        \item $X$ is a martingale if $X_n=\mathrm{E}\left[X_{n+1} \mid \mathcal{F}_n\right]$.
        \item $X$ is a supermartingale if $X_n \geq \mathrm{E}\left[X_{n+1} \mid \mathcal{F}_n\right]$.
        \item  $X$ is a submartingale if $X_n \leq E\left[X_{n+1} \mid \mathcal{F}_n\right]$.
    \end{itemize}
\end{defn}
We see that the definition of a martingale was shared in definition $1.1$, this is the martingale property. This equation emphasizes the current value, $X_n$ is the expected value of the next step, given the information up to $n$. However, this definition is in discrete time, meaning that it follows the adaptation and integrability condition. To better understand this idea, we can use an example of  the fair game shared in the introduction. Using the same rules, we can now prove this definition. 

\begin{ex}
    Let $X_n$ represent the player's total winnings after $n$ games. We will show that $({X_n})_{n \geq 0}$ is a martingale. \\ Filtration:\\
    Define the filtration $\left\{\mathcal{F}_n\right\}_{n \geq 0}$ where $\mathcal{F}_n$ represents the information up to the $n$th game.
    Specifically, $\mathcal{F}_n$ contains the outcomes of the first $n$ coin tosses.\\
    Martingale Property:\\
    To prove that $\left\{X_n\right\}_{n \geq 0}$ is a martingale with respect to $\left\{\mathcal{F}_n\right\}_{n \geq 0}$, we need to verify the three conditions:
    \begin{itemize}
        \item Adaptation:
        $X_n$ is $\mathcal{F}_n$-measurable because the total winnings after $n$ games depend only on the outcomes of the first $n$ games.
        \item Integrability:
        The expectation $\mathbb{E}\left[\left|X_n\right|\right]$ is finite. Since $X_n$ is the sum of $n$ independent bets of $\pm 1$, its absolute value grows linearly with $n$, ensuring finite expectation.
        \item Martingale Property:
        $$
        \mathbb{E}\left[X_{n+1} \mid \mathcal{F}_n\right]=X_n
        $$
    \end{itemize} 
\begin{proof}
    Given that $X_n$ is the player's total winnings after $n$ games, the total winnings after $n+1$ games, $X_{n+1}$, can be written as:
    $$
    X_{n+1}=X_n+Y_{n+1}
    $$
    where $Y_{n+1}$ is the result of the $(n+1)$ th game
    \begin{itemize}
        \item $Y_{n+1}=1$ if the player wins the $(n+1)$ th game (heads).
        \item $Y_{n+1}=-1$ if the player loses the $(n+1)$ th game (tails).
    \end{itemize}
    The expected value of $Y_{n+1}$ given $\mathcal{F}_n$ is:
    $$
    \mathbb{E}\left[Y_{n+1} \mid \mathcal{F}_n\right]=0
    $$
    because the coin toss is fair, and the expected outcome of a fair coin toss is zero (since
    $$
    \left.P\left(Y_{n+1}=1\right)=P\left(Y_{n+1}=-1\right)=1 / 2\right)
    $$
    \noindent
    Therefore,
    $$
    \mathbb{E}\left[X_{n+1} \mid \mathcal{F}_n\right]=\mathbb{E}\left[X_n+Y_{n+1} \mid \mathcal{F}_n\right]=X_n+\mathbb{E}\left[Y_{n+1} \mid \mathcal{F}_n\right]=X_n+0=X_n
    $$
    \noindent
    This confirms that the process $\left\{X_n\right\}_{n \geq 0}$ is a martingale with respect to the filtration $\left\{\mathcal{F}_n\right\}_{n \geq 0}$
\end{proof}
\end{ex}
\noindent
From this example, we can move to another stochastic process like martingales, stopping time.
\begin{defn}
    Given a probability space $(\Omega, \mathcal{F}, \mathbb{P})$ and a filtration $\left\{\mathcal{F}_t\right\}_{t \geq 0}$, a random variable $\tau: \Omega \rightarrow[0, \infty]$ is a stopping time if for every $t \geq 0$,
    $$
    \{\tau \leq t\} \in \mathcal{F}_t .
    $$
\end{defn}
\begin{lemma}
    Let T be a stopping time If there exists a not random variable, $N$ and $ \varepsilon > 0$ such that $\mathbb{P}\left(T \leq n+N \mid \mathcal{F}_n\right)>\varepsilon \quad \forall n \geq 0$, then $\mathbb{E} [T]<\infty$.
\end{lemma}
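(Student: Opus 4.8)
The plan is to show that $T$ has a geometrically decaying tail by slicing time into blocks of length $N$. The hypothesis guarantees that, no matter what has happened up to time $n$, there is probability at least $\varepsilon$ of stopping within the next $N$ steps; iterating this block-by-block forces $\mathbb{P}(T > kN)$ to decay like $(1-\varepsilon)^k$. Finiteness of $\mathbb{E}[T]$ then drops out of the tail-sum formula $\mathbb{E}[T] = \sum_{m \geq 0}\mathbb{P}(T > m)$ for the nonnegative integer-valued $T$.

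First I would record the measurability fact that makes the conditioning legal: since $T$ is a stopping time, Definition 5.4 gives $\{T \leq kN\} \in \mathcal{F}_{kN}$, so the complementary event $\{T > kN\}$ is $\mathcal{F}_{kN}$-measurable. I would also restate the hypothesis in complementary form as $\mathbb{P}(T > n+N \mid \mathcal{F}_n) \leq 1 - \varepsilon$ for every $n \geq 0$. These two observations are exactly what is needed to run the induction.

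The heart of the argument is the inductive claim $\mathbb{P}(T > kN) \leq (1-\varepsilon)^k$. For the inductive step I would use the inclusion $\{T > (k+1)N\} \subseteq \{T > kN\}$ together with the tower property to write
$$
\mathbb{P}(T > (k+1)N) = \mathbb{E}\big[\mathbf{1}_{\{T > kN\}}\,\mathbb{P}\big(T > (k+1)N \mid \mathcal{F}_{kN}\big)\big],
$$
where the $\mathcal{F}_{kN}$-measurable indicator $\mathbf{1}_{\{T > kN\}}$ has been pulled outside the conditional expectation. Applying the hypothesis at the index $n = kN$ bounds the inner conditional probability by $1-\varepsilon$, yielding $\mathbb{P}(T > (k+1)N) \leq (1-\varepsilon)\,\mathbb{P}(T > kN)$; with base case $\mathbb{P}(T > 0) \leq 1$ the induction closes. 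As a byproduct, $\mathbb{P}(T = \infty) = \lim_{k}\mathbb{P}(T > kN) = 0$, so $T$ is almost surely finite.

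To finish, I would sum the tail over each block, using that $\mathbb{P}(T > m) \leq \mathbb{P}(T > kN)$ whenever $kN \leq m < (k+1)N$:
$$
\mathbb{E}[T] = \sum_{m=0}^{\infty}\mathbb{P}(T > m) \leq N\sum_{k=0}^{\infty}\mathbb{P}(T > kN) \leq N\sum_{k=0}^{\infty}(1-\varepsilon)^k = \frac{N}{\varepsilon} < \infty.
$$
I expect the conditioning step to be the only real obstacle: one must be careful that $\{T > kN\}$ is genuinely $\mathcal{F}_{kN}$-measurable so it can be factored out, and that the hypothesis is invoked at the block endpoint $n = kN$ rather than at a generic $m$. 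Everything after the geometric bound is a routine tail estimate.
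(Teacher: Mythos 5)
Your proof is correct and follows essentially the same route as the paper: establish the geometric tail bound $\mathbb{P}(T > kN) \leq (1-\varepsilon)^k$ block by block, then sum the tail via $\mathbb{E}[T] = \sum_{m \geq 0} \mathbb{P}(T > m) \leq N \sum_k \mathbb{P}(T > kN) \leq N/\varepsilon$. Your version is in fact slightly more careful than the paper's, which conditions on the event $\{T > (k-1)N\}$ directly rather than justifying, as you do via the tower property and the $\mathcal{F}_{kN}$-measurability of $\mathbf{1}_{\{T > kN\}}$, why the hypothesis on $\mathbb{P}(T \leq n+N \mid \mathcal{F}_n)$ transfers to that conditioning.
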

\begin{proof}
    $\mathbb{P}(T > kN) = \mathbb{P}(T > k N \cup T > (k - 1) N)$ $$ = \mathbb{P}(T > kN | T > (k - 1) N) * \mathbb{P}(T > (k - 1)N) )$$ $$\leq (1-\varepsilon) * \mathbb{P}(T > (k - 1) N) \quad \leq \quad (1-\varepsilon) * \mathbb{P}(T > (k - 2) N)  \leq \cdots 
     \leq (1-\varepsilon)^{k}$$ $$\mathbb{E}[T] = \sum_{l = 0}^\infty \mathbb{P}(T > l) \leq N \sum_{k = 0} ^\infty \mathbb{P}(T > k N) \leq N \sum_{k = 0} ^\infty (1-\varepsilon)^k = \frac{N}{1-(1-\varepsilon)} = \frac{N}{\varepsilon}  < \infty $$
\end{proof}
\noindent
This shows that the expectation of a $T$, a stopping time is finite. Now, we can move into the martingale transform.

\begin{defn}
    We assume we have a probability space with a filtration. $C_n$ is predictable if $C_n$ is $\mathcal{F}_{n-1}$ measurable $\forall n$. Imagine that $C_n$ is the strategy that you are making at time $n$, but to make prediction at time $n$, we used the information at time $n-1$. Once we have this base information, we can move to the martingale transform of $X$. $$Y_n = \sum_{1 \leq k \leq n} C_k * (X_k - X_{k-1}) $$
\end{defn}
\begin{thm}
    If $C_n$ is predictable, and $0 \leq C_n \leq k$. $\forall (n,\omega)$ and X is a supermartingale, then so is the martingale transform, $C \cdot X$. If $C_n$ is predictable, and $|C_n| \leq K$ $\forall (n,\omega)$ and $X$ is a martingale, then so is $C \cdot X$
\end{thm}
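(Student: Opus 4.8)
The plan is to verify the martingale (respectively supermartingale) property directly from the definition, using the three defining conditions in turn: adaptedness, integrability, and the conditional-expectation identity. I would handle the supermartingale case first and then indicate how the martingale case follows by the same computation with equalities replacing inequalities.

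\medskip

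First I would check that $Y_n = \sum_{1 \leq k \leq n} C_k (X_k - X_{k-1})$ is adapted. Since $C_k$ is $\mathcal{F}_{k-1}$-measurable and hence $\mathcal{F}_n$-measurable for $k \leq n$, and each $X_k$ is $\mathcal{F}_n$-measurable for $k \leq n$ by adaptedness of $X$, the finite sum $Y_n$ is $\mathcal{F}_n$-measurable. Next I would verify integrability: using the uniform bound $|C_k| \leq K$ together with $X_k \in L^1(\Omega)$, the triangle inequality gives $\mathbb{E}|Y_n| \leq K \sum_{k=1}^n (\mathbb{E}|X_k| + \mathbb{E}|X_{k-1}|) < \infty$, so $Y_n \in L^1$.

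\medskip

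The main computation is the conditional-expectation step. I would write the increment
$$
Y_{n+1} - Y_n = C_{n+1}(X_{n+1} - X_n)
$$
and compute $\mathbb{E}[Y_{n+1} - Y_n \mid \mathcal{F}_n]$. The key point is that $C_{n+1}$ is $\mathcal{F}_n$-measurable (by predictability), so it can be pulled out of the conditional expectation as a known factor:
$$
\mathbb{E}[C_{n+1}(X_{n+1} - X_n) \mid \mathcal{F}_n] = C_{n+1}\,\mathbb{E}[X_{n+1} - X_n \mid \mathcal{F}_n].
$$
Here $\mathbb{E}[X_{n+1} \mid \mathcal{F}_n] - X_n \leq 0$ since $X$ is a supermartingale (and $X_n$ is $\mathcal{F}_n$-measurable, so $\mathbb{E}[X_n \mid \mathcal{F}_n] = X_n$). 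Combined with the sign constraint $0 \leq C_{n+1} \leq k$, the product is $\leq 0$, which gives $\mathbb{E}[Y_{n+1} \mid \mathcal{F}_n] \leq Y_n$ as required.

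\medskip

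I expect the one subtle point — the main obstacle — to be justifying the "pull out what is known" step rigorously, namely that $\mathbb{E}[C_{n+1} Z \mid \mathcal{F}_n] = C_{n+1}\,\mathbb{E}[Z \mid \mathcal{F}_n]$ when $C_{n+1}$ is $\mathcal{F}_n$-measurable and bounded; this is a standard property of conditional expectation that can be established via the defining integral identity of Theorem~4.6 (first for indicators $C_{n+1} = \mathbf{1}_B$ with $B \in \mathcal{F}_n$, then for simple functions by linearity, then for bounded measurable $C_{n+1}$ by the approximation guaranteed by Theorem 3.4/4.2). The role of the sign hypothesis is also worth emphasizing: in the supermartingale case the nonnegativity $C_n \geq 0$ is essential to preserve the direction of the inequality, whereas in the martingale case $\mathbb{E}[X_{n+1} - X_n \mid \mathcal{F}_n] = 0$, so the factor $C_{n+1}$ multiplies zero and the martingale identity $\mathbb{E}[Y_{n+1} \mid \mathcal{F}_n] = Y_n$ holds regardless of the sign of $C_{n+1}$ — only the uniform bound $|C_n| \leq K$ is needed, and solely to secure integrability.
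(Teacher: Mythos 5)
Your proposal is correct and follows essentially the same route as the paper's proof: compute the increment $Y_{n+1}-Y_n = C_{n+1}(X_{n+1}-X_n)$, pull the predictable factor out of the conditional expectation, and invoke the (super)martingale property of $X$. Your version is in fact more complete than the paper's, which only sketches the martingale case and omits the adaptedness and integrability checks as well as the role of the sign hypothesis $0 \leq C_n$ in the supermartingale case, all of which you handle correctly.
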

\begin{rem}
    If $C_n$ has $\mathbb{E}[C_n^2] < \infty$, and $\mathbb{E}[X_n^2] < \infty$ $\forall n$, then Theorem $5.7$ still works without the bound, $|C_n| \leq K$.
\end{rem}
\begin{proof}
    To prove $Y$ is a martingale, then we must prove the following equation: $\mathbb{E}[(Y_n - Y_{n-1}) \mathcal{F}_{n-1}] = 0$. To prove this, we must find what the difference of $(Y_n - Y_{n-1})$ is and it should equal $C_n * (X_n - X_{n-1})$, where $C_n$ is $\mathcal{F}_{n-1}$ measurable. Let's begin by factoring out the original equation. $$ = C_n \mathbb{E}(X_n - X_{n-1}|\mathcal{F}_{n-1})$$ $$ = C_n(\mathbb{E}(X_n|\mathcal{F}_{n-1}) - \mathbb{E}(X_{n-1}|\mathcal{F}_{n-1}))$$ If $X$ is a martingale, $X_{n-1} = 0$, which is proved here. 
\end{proof}
\begin{thm}
    Let $T$ be a stopping time, if $X$ is a supermartingale, then so is $X^T$.
\end{thm}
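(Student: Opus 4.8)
The plan is to realize the stopped process $X^T$, defined by $X^T_n = X_{T \wedge n}$, as its initial value plus a supermartingale transform, and then to invoke Theorem 5.7. First I would record the telescoping identity
$$X_{T \wedge n} = X_0 + \sum_{k=1}^n \mathbf{1}_{\{T \geq k\}}\,(X_k - X_{k-1}),$$
which one checks increment by increment: when $T \geq k$ the difference $X_{T \wedge k} - X_{T \wedge (k-1)}$ equals $X_k - X_{k-1}$, while when $T \leq k-1$ it vanishes, exactly as the indicator $\mathbf{1}_{\{T \geq k\}}$ prescribes. Setting $C_k = \mathbf{1}_{\{T \geq k\}}$, this reads $X^T_n = X_0 + (C \cdot X)_n$ in the notation of Definition 5.6.

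The central step is to verify that $C$ satisfies the hypotheses of Theorem 5.7. For predictability I would use the stopping-time condition: since $\{T \geq k\} = \{T \leq k-1\}^c$ and $\{T \leq k-1\} \in \mathcal{F}_{k-1}$ by Definition 5.4, the variable $C_k$ is $\mathcal{F}_{k-1}$-measurable for every $k$. Moreover $0 \leq C_k \leq 1$, so the bound $0 \leq C_n \leq K$ required by the supermartingale case of Theorem 5.7 holds with $K = 1$. Hence $C \cdot X$ is itself a supermartingale.

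It then remains to pass from $C \cdot X$ back to $X^T = X_0 + (C \cdot X)$. Because $X_0$ is $\mathcal{F}_0$-measurable, it is $\mathcal{F}_n$-measurable for all $n$, so adding it preserves adaptedness, and from
$$\mathbb{E}\big[X_0 + (C \cdot X)_{n+1} \mid \mathcal{F}_n\big] = X_0 + \mathbb{E}\big[(C \cdot X)_{n+1} \mid \mathcal{F}_n\big] \leq X_0 + (C \cdot X)_n$$
the supermartingale inequality is preserved; integrability of each $X^T_n$ follows since it is a finite sum of the $L^1$ increments weighted by bounded coefficients. I expect the only genuine care to lie in the bookkeeping of the telescoping identity and in confirming predictability from the stopping-time condition — the analytic content is absorbed entirely by Theorem 5.7, so there is no serious obstacle beyond checking these hypotheses cleanly.
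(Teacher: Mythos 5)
Your proposal is correct and follows essentially the same route as the paper: both write the stopped process as $X^T_n = X_0 + (C\cdot X)_n$ with $C_k = \mathbf{1}_{\{T\ge k\}}$ and invoke Theorem 5.7. In fact you are more careful than the paper on the one point that matters --- you justify predictability via $\{T\ge k\}=\{T\le k-1\}^c\in\mathcal{F}_{k-1}$ (the paper asserts only $\mathcal{F}_n$-measurability) and you explicitly check the bound $0\le C_k\le 1$ and the harmless addition of $X_0$.
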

\begin{proof}
    $C_n := \mathbbm{1}(n \leq T) = \mathbbm{1}(n-1 < T)$, which is $\mathcal{F}_n$ measurable. $$(C \cdot X)_n = \sum_{k=1}^n C_k (X_k - X_{k-1})$$ $$=\left\{\begin{array}{l}T \geq n: \sum_{k=1}^n\left(x_{k}-x_{k-1}\right)=x_n-x_0 \\ T<n: \sum_{k=1}^T\left(x_k-x_{k-1}\right)=x_T-x_0\end{array}\right.$$ \bigskip$$= X_{T \wedge n}-X_0 = {X^T}_n - X_0$$
\end{proof}
Now we can move to the Optional Stopping Theorem, the relation between stopping time and martingales. 
\begin{rem}
    If $M_n$ is a martingale, then $\mathbb{E}[M_n] = \mathbb{E}(\mathbb{E}[M_n|\mathcal{F}_{n-1}]) = \mathbb{E}[M_{n-1}] = \cdots = \mathbb{E}[M_0]$ (from the Tower Property).
\end{rem}
\begin{rem}
    If $T$ is a stopping time, $M_n$ is a martingale, then $\mathbb{E}[M_{T \wedge n}] = \mathbb{E}_{M_0}$.
\end{rem}
\begin{thm}
    Doob's Optional Stopping: Let $T$ be a stopping time, $X$ is a supermartingale. If it follows these conditions: 
    \begin{itemize}
        \item T is bounded OR X is bounded and T is infinite almost surely OR $\mathbb{E}[T] < \infty$ and $|X_n - X_{n-1}| \leq k$
    \end{itemize}
    Then $\mathbb{E}[X_T] \leq \mathbb{E}[X_0]$
\end{thm}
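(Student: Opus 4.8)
The plan is to reduce everything to the stopped-process theorem (Theorem 5.9) together with a single limiting argument whose justification changes from case to case. Recall that for a supermartingale the conditional inequality $X_{n-1} \geq \mathbb{E}[X_n \mid \mathcal{F}_{n-1}]$ passes through the expectation (the supermartingale analogue of Remark 5.10) to give non-increasing means $\mathbb{E}[X_n] \leq \mathbb{E}[X_{n-1}] \leq \cdots \leq \mathbb{E}[X_0]$. Since Theorem 5.9 tells us that the stopped process $X^T$, with $X^T_n = X_{T \wedge n}$, is again a supermartingale, applying this to $X^T$ yields the uniform bound
$$\mathbb{E}[X_{T \wedge n}] = \mathbb{E}[X^T_n] \leq \mathbb{E}[X^T_0] = \mathbb{E}[X_0] \quad \text{for every } n \geq 0.$$
Everything then comes down to showing $\mathbb{E}[X_{T \wedge n}] \to \mathbb{E}[X_T]$ as $n \to \infty$, for then the inequality is preserved in the limit.

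First I would dispatch the bounded case: if $T \leq N$ almost surely, then for every $n \geq N$ we have $T \wedge n = T$, so $X_{T \wedge n} = X_T$ and the uniform bound already reads $\mathbb{E}[X_T] \leq \mathbb{E}[X_0]$ with no limit to take. In the two remaining cases the pointwise convergence $X_{T \wedge n} \to X_T$ is immediate once $T < \infty$ almost surely, since on $\{T < \infty\}$ we eventually have $T \wedge n = T$; note that $\mathbb{E}[T] < \infty$ forces $T < \infty$ almost surely, and this is assumed outright in the case where $X$ is bounded. The remaining work is to upgrade pointwise convergence to convergence of expectations. When $X$ is bounded, say $|X_m| \leq M$ for all $m$, the variables $X_{T \wedge n}$ are uniformly bounded by $M$, so bounded (dominated) convergence gives $\mathbb{E}[X_{T \wedge n}] \to \mathbb{E}[X_T]$ directly.

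When instead $\mathbb{E}[T] < \infty$ with bounded increments $|X_m - X_{m-1}| \leq k$, I would manufacture an integrable dominating function by telescoping:
$$|X_{T \wedge n} - X_0| \leq \sum_{j=1}^{T \wedge n} |X_j - X_{j-1}| \leq k\,(T \wedge n) \leq k\,T,$$
so that $|X_{T \wedge n}| \leq |X_0| + kT$ for all $n$. This bound is integrable precisely because $\mathbb{E}[T] < \infty$, and dominated convergence again yields $\mathbb{E}[X_{T \wedge n}] \to \mathbb{E}[X_T]$. Passing to the limit in the uniform bound then gives $\mathbb{E}[X_T] \leq \mathbb{E}[X_0]$ in every case.

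I expect the main obstacle to be precisely this third case: one must check that the dominating bound is genuinely independent of $n$ and that $|X_0| + kT \in L^1$, which is the only place where the two hypotheses $\mathbb{E}[T] < \infty$ and $|X_n - X_{n-1}| \leq k$ are used together. The other delicate point, easy to overlook, is verifying $T < \infty$ almost surely before claiming $X_{T \wedge n} \to X_T$, since otherwise $X_T$ need not even be defined on $\{T = \infty\}$. Once the limit interchange is secured, the theorem follows immediately from the uniform bound supplied by Theorem 5.9.
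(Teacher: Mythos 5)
Your proposal is correct and follows essentially the same route as the paper's proof: truncate the stopping time to $T \wedge n$, obtain the uniform bound $\mathbb{E}[X_{T\wedge n}] \leq \mathbb{E}[X_0]$ from the stopped supermartingale (Theorem 5.9), and pass to the limit by dominated convergence. In fact your write-up is more complete than the paper's sketch, which asserts equality $\mathbb{E}[X_{T_n}] = \mathbb{E}[X_0]$ (only correct for martingales, not supermartingales) and never verifies the domination hypothesis case by case as you do, so the details you supply --- the bound $|X_{T\wedge n}| \leq |X_0| + kT$ in the finite-expectation case and the check that $T < \infty$ almost surely before invoking pointwise convergence --- are exactly the gaps the paper leaves open.
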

\begin{proof}
     If $T$ is not almost surely bounded, approximate it by a sequence of bounded stopping times $T_n=\min (T, n)$. Then we can use the martingale property to show that for each $T_n, \mathbb{E}\left[X_{T_n}\right]=\mathbb{E}\left[X_0\right]$ Finally, apply the Dominated Convergence Theorem  to take the limit as $n \rightarrow \infty$ and conclude that $\mathbb{E}\left[X_T\right]=$ $\mathbb{E}\left[X_0\right]$    
\end{proof}
After talking a bit about convergence in supermartingales, we can move to understanding Doob's Forward Convergence Theorem. 

\begin{defn}
    $X \in L_p$, if $\mathbb{E}[|X|^p] < \infty$; $|X|^p := (\mathbb{E}[|X|^p])^{1/p}$, where p is positive. 
\end{defn}
\begin{defn}
    $X_n$ is bounded in $L^p$, if we take $sup_n{||X_n||_p < \infty}$ This means if we take the p-norms, it will still be bounded regardless if we take the largest value of n. However, there is an uniform bound, $||X_n||_p \leq k < \infty$ for all $n$. 
\end{defn}
\begin{lemma}
    Using Doob's upcrossing Lemma, if $X_n$ is a supermartingale, then $(b-a) \mathbb{E}[U_n]$ on the interval $[a,b] \leq \mathbb{E}[X_n - a]^-$, we can do a collarary. 
\end{lemma}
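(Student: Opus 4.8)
The plan is to encode the upcrossing count $U_n = U_n[a,b]$ as the payoff of a predictable ``buy-low, sell-high'' gambling strategy and then bound its expected value using the martingale transform theorem (Theorem 5.7). First I would fix $a < b$ and let $U_n$ denote the number of completed upcrossings of $[a,b]$ by the path $X_0, \ldots, X_n$, i.e.\ the number of disjoint time stretches on which the process starts at or below $a$ and later reaches $b$ or above. To turn this into a betting rule, define stopping times recursively by $S_1 = \inf\{m \geq 0 : X_m \leq a\}$, $T_1 = \inf\{m \geq S_1 : X_m \geq b\}$, and for $k \geq 1$, $S_{k+1} = \inf\{m \geq T_k : X_m \leq a\}$ and $T_{k+1} = \inf\{m \geq S_{k+1} : X_m \geq b\}$; then set $C_n = \sum_k \mathbbm{1}(S_k < n \leq T_k)$, so that $C_n = 1$ exactly when we are partway through an upcrossing attempt and $C_n = 0$ otherwise.

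Next I would verify that $C$ is predictable and bounded: whether $C_n = 1$ is determined by $X_0, \ldots, X_{n-1}$, so $C_n$ is $\mathcal{F}_{n-1}$-measurable, and plainly $0 \leq C_n \leq 1$. Since $X$ is a supermartingale, Theorem 5.7 then shows that the martingale transform $Y = C \cdot X$, with $Y_n = \sum_{1 \leq k \leq n} C_k (X_k - X_{k-1})$, is itself a supermartingale. Because $Y_0 = 0$, taking expectations in the supermartingale inequality $Y_{m-1} \geq \mathbb{E}[Y_m \mid \mathcal{F}_{m-1}]$ repeatedly gives $\mathbb{E}[Y_n] \leq \mathbb{E}[Y_0] = 0$.

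The heart of the argument is the pathwise lower bound $Y_n \geq (b-a)U_n - (X_n - a)^-$. Each completed upcrossing runs from some $S_k$ to $T_k$ and contributes $X_{T_k} - X_{S_k} \geq b - a$ to the sum defining $Y_n$, which together account for the term $(b-a)U_n$. The only other nonzero contribution comes from a possibly incomplete upcrossing still in progress at time $n$; its contribution is $X_n - X_{S_k} \geq X_n - a \geq -(X_n - a)^-$, using $X_{S_k} \leq a$ and the elementary fact that $x \geq -x^-$ for every real $x$. Summing over all stretches yields the bound.

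Combining $\mathbb{E}[Y_n] \leq 0$ with the pathwise bound gives $0 \geq (b-a)\mathbb{E}[U_n] - \mathbb{E}[(X_n - a)^-]$, which rearranges to the claimed inequality $(b-a)\mathbb{E}[U_n] \leq \mathbb{E}[(X_n - a)^-]$. I expect the main obstacle to be this pathwise inequality: one must check that the recursively defined strategy $C$ counts each completed upcrossing exactly once with no overlap, and carefully isolate the final incomplete upcrossing so as to extract the $-(X_n - a)^-$ correction without overcounting.
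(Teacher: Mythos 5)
Your proof is correct, and it is the standard argument (essentially the one in Williams, \emph{Probability with Martingales}, which the paper cites). Note, however, that the paper itself never proves this lemma: it invokes ``Doob's upcrossing Lemma'' as known, and the proof environment that follows it establishes only the corollary about $L^1$-bounded supermartingales. So your argument is not an alternative route --- it supplies a proof where the paper has none. The route you chose fits the paper well, since you derive the inequality from the paper's own Theorem 5.7 (the transform of a supermartingale by a bounded nonnegative predictable process is again a supermartingale) applied to the ``buy low at $a$, sell high at $b$'' strategy $C_n=\sum_k \mathbbm{1}(S_k<n\leq T_k)$. Your predictability check is right: $\{S_k<n\leq T_k\}=\{S_k\leq n-1\}\setminus\{T_k\leq n-1\}\in\mathcal{F}_{n-1}$ because $S_k$ and $T_k$ are stopping times. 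The pathwise bound $Y_n\geq (b-a)U_n-(X_n-a)^-$ is indeed the crux, and your handling of it is sound: the intervals $(S_k,T_k]$ are disjoint and ordered, so each completed upcrossing telescopes to $X_{T_k}-X_{S_k}\geq b-a$; at most one upcrossing can be in progress at time $n$, and its contribution is $X_n-X_{S_k}\geq X_n-a\geq -(X_n-a)^-$, while the contribution is $0\geq -(X_n-a)^-$ when nothing is in progress. Combining this with $\mathbb{E}[Y_n]\leq\mathbb{E}[Y_0]=0$ gives exactly the stated inequality, so the argument closes with no gaps.
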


\begin{cor}
    Let $X$ be a supermartingale, bounded in $L^1: sup_n \mathbb{E}[|X_n|] < \infty $. If we fix $a < b$, then $(b-a) \mathbb{E}[U_\infty]$ on the interval $[a,b] \leq |a| + sup_m \mathbb{E}[|X_m|] < \infty$. In particular, $\mathbb{P}(U_\infty [a,b] = \infty) = 0$.
\end{cor}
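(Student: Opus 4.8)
The plan is to start from Doob's upcrossing Lemma stated just above and push the uniform bound it provides through a monotone limit. First I would record the pointwise estimate for the negative part: since $(X_n - a)^- = \max(a - X_n, 0) \le |a - X_n| \le |a| + |X_n|$, taking expectations and using monotonicity of the integral gives $\mathbb{E}[(X_n - a)^-] \le |a| + \mathbb{E}[|X_n|] \le |a| + \sup_m \mathbb{E}[|X_m|]$. Combining this with the Lemma yields, for every fixed $n$,
\[
(b-a)\,\mathbb{E}[U_n[a,b]] \;\le\; \mathbb{E}[(X_n - a)^-] \;\le\; |a| + \sup_m \mathbb{E}[|X_m|],
\]
and the crucial point is that the right-hand side does not depend on $n$.

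Next I would exploit that the upcrossing counts are non-decreasing in $n$: each $U_n[a,b]$ counts the upcrossings of $[a,b]$ completed by time $n$, so $U_n[a,b] \uparrow U_\infty[a,b]$ pointwise as $n \to \infty$. Because these are non-negative random variables increasing to $U_\infty[a,b]$, the Monotone Convergence Theorem lets me pass the expectation through the limit, giving $\mathbb{E}[U_n[a,b]] \uparrow \mathbb{E}[U_\infty[a,b]]$. Taking $n \to \infty$ in the displayed inequality, whose right-hand side is constant in $n$, then produces
\[
(b-a)\,\mathbb{E}[U_\infty[a,b]] \;\le\; |a| + \sup_m \mathbb{E}[|X_m|] \;<\; \infty,
\]
where finiteness of the bound is exactly the hypothesis that $X$ is bounded in $L^1$.

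Finally, since $a < b$ forces $b - a > 0$, dividing shows $\mathbb{E}[U_\infty[a,b]] < \infty$. A non-negative random variable with finite expectation must be finite almost surely, so $\mathbb{P}(U_\infty[a,b] = \infty) = 0$, which is the concluding assertion. The negative-part estimate and this last step are routine.

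I expect the main obstacle to be the limit interchange in the second step. One must genuinely verify that the upcrossing counts increase to $U_\infty[a,b]$ and invoke monotone convergence on the \emph{left}-hand side, rather than attempting to apply the upcrossing Lemma directly ``at $n = \infty$,'' where it is not defined. The real content of the corollary is precisely that the $n$-independent bound coming from $L^1$-boundedness survives this monotone limit; everything else is bookkeeping.
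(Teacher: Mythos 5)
Your proposal is correct and follows essentially the same route as the paper's own proof: bound $\mathbb{E}[(X_n-a)^-]$ by $|a| + \sup_m \mathbb{E}[|X_m|]$ via the triangle inequality, combine with the upcrossing Lemma to get an $n$-independent bound, and pass to the limit with monotone convergence on the non-decreasing counts $U_n[a,b]\uparrow U_\infty[a,b]$. Your explicit final step deducing $\mathbb{P}(U_\infty[a,b]=\infty)=0$ from finiteness of the expectation is a welcome bit of added completeness over the paper's version, which leaves that implication implicit.
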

\begin{proof}
    $(b-a) \mathbb{E}[U_n][a,b] \leq \mathbb{E}[X_n - a]^- \leq \mathbb{E}|X_n - a| \leq \mathbb{E}|X_n| + |a|\leq sup_n \mathbb{E}|X_n| + |a|$. $U_n [a,b]$ is defined as number of upcrossings until time N, so if we know that it is non-decreasing in N, then it has a limit. $lim_{n \to \infty}: U_\infty [a,b]$. We need to make sure that this expression is finite, proving the corollary. To begin this, we take $(b-a) \mathbb{E}|U_\infty|[a,b]$ then use the monotone convergence theorem to find the limit. This means that it equals to $(b-a) lim_{n \to \infty } \mathbb{E} U_n [a,b]$, which is $\leq k + |a|$ due to the first statement of the proof. 
\end{proof}
\noindent
Now, we can move to what Doob's forward convergence theorem states. 

\begin{thm}
    Doob's forward convergence: Let $X$ be a supermartingales, bounded in $L^1$. Then $X_\infty := lim_n X_n$ exists almost surely and is almost surely finite. 
\end{thm}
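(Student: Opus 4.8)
The plan is to leverage the upcrossing bound recorded in the preceding Corollary to rule out oscillation of the paths, and then to promote the resulting convergence in the extended reals to genuine finiteness by a Fatou argument. The starting point is a purely real-analytic observation: a sequence of real numbers $\big(X_n(\omega)\big)$ fails to converge in $[-\infty,\infty]$ exactly when $\liminf_n X_n(\omega) < \limsup_n X_n(\omega)$, and in that situation one may insert rationals $a<b$ with $\liminf_n X_n(\omega) < a < b < \limsup_n X_n(\omega)$. For such an $\omega$ the path drops below $a$ and climbs above $b$ infinitely often, so the total upcrossing count satisfies $U_\infty[a,b](\omega)=\infty$.

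First I would make this precise by setting $\Lambda = \{\omega : \lim_n X_n(\omega) \text{ does not exist in } [-\infty,\infty]\}$ and recording the inclusion $\Lambda \subseteq \bigcup_{a<b,\ a,b\in\mathbb{Q}} \{U_\infty[a,b]=\infty\}$. For each fixed rational pair $a<b$, the Corollary gives $(b-a)\,\mathbb{E}[U_\infty[a,b]] \leq |a| + \sup_m \mathbb{E}[|X_m|] < \infty$, which forces $U_\infty[a,b]<\infty$ almost surely, i.e. $\mathbb{P}(U_\infty[a,b]=\infty)=0$. Since $\mathbb{Q}$ is countable, the right-hand side above is a countable union of null sets and is therefore itself null, so $\mathbb{P}(\Lambda)=0$. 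This shows that $X_\infty := \lim_n X_n$ exists almost surely as an element of $[-\infty,\infty]$.

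It then remains to upgrade existence in $[-\infty,\infty]$ to almost-sure finiteness. Here I would apply Fatou's lemma to the nonnegative sequence $|X_n|$: on the full-measure convergence event one has $|X_\infty| = \liminf_n |X_n|$, and hence $\mathbb{E}[|X_\infty|] \leq \liminf_n \mathbb{E}[|X_n|] \leq \sup_n \mathbb{E}[|X_n|] < \infty$, where the last bound is the $L^1$-boundedness hypothesis. A random variable with finite expected absolute value is finite almost everywhere, so $X_\infty$ is almost surely finite, completing the argument.

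The main obstacle is the first step: correctly translating the failure of convergence into an infinite-upcrossing statement and verifying that the reduction to countably many rational intervals is airtight, since everything afterward is a direct appeal to the Corollary and to Fatou's lemma. The subtlety worth emphasizing is that the Corollary controls $U_\infty$ only for a single fixed interval $[a,b]$, so it is precisely the countability of $\mathbb{Q}$ that allows a single null set to absorb all possible oscillatory behavior at once.
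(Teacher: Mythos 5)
Your proposal is correct and follows essentially the same route as the paper's proof: bound the oscillation event by a union of infinite-upcrossing events controlled by the preceding Corollary, then apply Fatou's lemma to $|X_n|$ to get almost-sure finiteness. In fact your write-up is slightly more careful than the paper's, since you explicitly restrict to rational pairs $a<b$ so that the union of null sets is countable — a detail the paper's $\bigcup_{a<b}$ leaves implicit.
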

\begin{proof}
    We want to prove that $X_n$ does not convergence. To prove that $X_\infty := lim_n X_n$ exists almost surely, we begin that $= \bigcup_{a < b} lim inf X_n < a < b < lim sup X_n \subseteq \bigcup_{a < b} U_\infty [a,b] = \infty $, so the probability of this latter part $= 0$. $$\mathbb{P}(X_n  \text { does not converge}) = 0$$ For the second part of the theorem, of "almost surely finite," we state that $\mathbb{E}|X_\infty| = \mathbb{E} [\text {lim inf}]|X_n| \leq$ (using Fatou's Lemma) $[\text {lim inf}] \mathbb{E}|X_n| \leq sup \mathbb{E}|X_n| < \infty$. Therefore, knowing that X is a supermartingale is bounded in $L^1$, we get $|X_\infty| < \infty $ almost surely.
\end{proof}
\begin{rem}
    If $X_n \geq 0$ is a supermartingale, then the $L^1$ bound is automatic: $\mathbb{E}|X_n| = \mathbb{E}[X_n] \leq \mathbb{E}[X_0] < \infty$
\end{rem}
\noindent
From here we are able to understand convergence in $L^p$, but take a case of a $L^2$ martingale. 
\noindent
$L^2$ martingales: $M_n \in L^2 \quad \forall n$. \\
\noindent
Then $M_v - M_u \perp L^2 (\mathcal{F}_u) $ In this situation, what does orthogonal exactly mean? Let's assume we have a square integrable random variable that is finite, we can define a similar scalar product. $\mathbb{E}(X \cdot Y)="\langle x, y\rangle"$ In the sense that these random variable are almost like vectors, we can fully understand what orthogonality means. Going back to the first first expression, $\forall v > u > t > s$, then following is true, $\mathbb{E}[(M_t - M_s) * (M_v - M_u)] = 0$ This is saying that the increment of my martingale from s to t and u to m are orthongal in the sense that my scalar product is 0. The proof of this is the following. 
\begin{proof}
    $\mathbb{E}[\mathbb{E} [(M_t - M_s) * (M_v - M_u) ] | \mathcal{F}_u]$ Because $v > u > t > s$, $M_t - M_s$ are $\mathcal{F}$-measurable. That means our new expression is, $\mathbb{E}[(M_t - M_s) * \mathbb{E}[(M_v - M_u) ] | \mathcal{F}_u]$ And by the martingale property, our latter expectation equals 0, this implies a theorem.
\end{proof}
\begin{thm}
    An $L^2$-martingale $M_n$ is bounded in $L^2$ if. $\sum_{k=1}^{\infty} \mathbb{E}\left(M_{k}-M_{k-1}\right)^2<\infty$. In this case, $M_n \rightarrow M_0$ almost surely in $L^2$. 
\end{thm}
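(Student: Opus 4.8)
The engine of the argument is the orthogonality of increments established immediately above. Writing $D_k := M_k - M_{k-1}$, I would first record that $M_n = M_0 + \sum_{k=1}^n D_k$, where the increments $D_k$ are mutually orthogonal in $L^2$ and each orthogonal to $M_0$ (since $M_0$ is $\mathcal{F}_0$-measurable and hence lies in $L^2(\mathcal{F}_{k-1})$ for every $k$, while $D_k \perp L^2(\mathcal{F}_{k-1})$). Expanding $M_n^2$ and discarding the vanishing cross terms yields the Pythagorean identity
$$\mathbb{E}[M_n^2] = \mathbb{E}[M_0^2] + \sum_{k=1}^n \mathbb{E}\big[(M_k - M_{k-1})^2\big].$$
This single identity drives the entire proof, so I would establish it at the outset.

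Given the identity, the equivalence is immediate. Letting $n \to \infty$, the right-hand side is monotone increasing in $n$, so $\sup_n \mathbb{E}[M_n^2] = \mathbb{E}[M_0^2] + \sum_{k=1}^\infty \mathbb{E}[(M_k - M_{k-1})^2]$. Since $\mathbb{E}[M_0^2]$ is finite (because $M_0 \in L^2$), the supremum is finite precisely when the series converges, which is exactly the claimed equivalence.

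For the convergence statement I would argue in two stages. First, $L^2$-boundedness implies $L^1$-boundedness by the Cauchy--Schwarz inequality, namely $\mathbb{E}|M_n| \le (\mathbb{E}[M_n^2])^{1/2}$, so Doob's Forward Convergence Theorem applies and produces a random variable $M_\infty$ with $M_n \to M_\infty$ almost surely and $M_\infty$ finite a.s. Second, for the $L^2$ convergence I would show that $(M_n)$ is Cauchy in $L^2$: applying orthogonality again to the increments between times $n$ and $m$ gives, for $m > n$,
$$\mathbb{E}\big[(M_m - M_n)^2\big] = \sum_{k=n+1}^m \mathbb{E}\big[(M_k - M_{k-1})^2\big],$$
which is a tail of a convergent series and hence tends to $0$. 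By completeness of $L^2$ the sequence converges in $L^2$ to some limit, and a subsequence of this $L^2$-convergent sequence converges a.s. to that same limit, forcing it to coincide with $M_\infty$.

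The only genuinely delicate point is reconciling the two modes of convergence, that is, matching the $L^2$ limit with the almost sure limit $M_\infty$. I would handle this by the standard fact that $L^2$ convergence implies convergence in probability, so a subsequence converges almost surely to the $L^2$ limit, and almost sure limits are unique up to null sets. Everything else is routine once the Pythagorean identity is in hand; that identity is the crux, and it rests entirely on the orthogonality lemma proved just before the theorem.
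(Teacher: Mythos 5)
Your proposal is correct and follows essentially the same route as the paper: the Pythagorean identity from orthogonal increments, Doob's Forward Convergence Theorem for the almost sure limit $M_\infty$, and the vanishing tail $\sum_{k=n+1}^{\infty}\mathbb{E}[(M_k-M_{k-1})^2]$ for the $L^2$ convergence. The only (harmless) divergence is at the final step, where you establish that $(M_n)$ is Cauchy in $L^2$ and identify the limit via a subsequence, whereas the paper applies Fatou's lemma to $\mathbb{E}[(M_{n+r}-M_n)^2]$ as $r\to\infty$ to bound $\mathbb{E}[(M_\infty-M_n)^2]$ directly by the tail sum.
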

\begin{proof}
    We can show proof of the Pythagorean Theorem. $\mathbb{E}[M^2_n] = ||M_n||^2_2$ $$= \mathbb{E} [M_0 + \sum_{k=1}^{\infty}(M_k - M_{k-1})]^2 $$ $$= \mathbb{E}[M^2_0] + \sum_{k=1}^n \mathbb{E}[M_k - M_{k-1}]^2 + 0$$
    Therefore, $M$ is bounded in $$L^2: \mathbb{E}[M_n^2] = \mathbb{E}[M_0^2] + \sum_{k=1}^n \mathbb{E}[M_k - M_{k-1}]^2 \leq \mathbb{E}[M_0^2] + \sum_{k=1}^\infty \mathbb{E}[M_k - M_{k-1}]^2 < \infty$$ \\
    Also, bounded in $L^1$ because of Doob's Forward Convergence, meaning that $$M_n \to^{\text {almost surely}} M_\infty$$ 
    The second norm states the following. $$\mathbb{E}[M_\infty - M_n]^2 \leq \text {lim inf} \mathbb{E}[M_{n+r} - M_n]^2 = \text {lim inf} \sum_{k =n+1}^{n+r} \mathbb{E}[M_k - M_{k-1}]^2 = \sum_{k =n+1}^{\infty} \mathbb{E}[M_k - M_{k-1}]^2 $$ and as $n$ approaches $\infty$, this expression equals 0.
\end{proof}

\section{APPLICATIONS IN FINANCE}
This martingale theory can be applied to stock market strategies. For instance, the Martingale Strategy suggests doubling your investment when you incur a loss. In the context of the stock market, if you purchase a stock at 10 dollars and its price drops to 9 dollars, you would buy another share at the new price. If the stock price falls further to 8 dollars, you would buy two additional shares. Eventually, when the stock price rebounds to 10 dollars, you not only recover your initial investment but also make a profit from the shares purchased at the lower prices. This approach reflects the martingale theory’s essence, where the future expected value of the process remains equal to the present value, despite short-term fluctuations.
\section{ACKNOWLEDGEMENTS}
I want to thank my mentor, Paulina Paiz for the valuable feedback and tips throughout this process. I would like to also thank Simon Rubenstein-Salzedo for organizing the Euler Circle.


\begin{thebibliography}{999}

\bibitem{goel22}
  Ishaan Goel,
  \emph{INTRODUCTION TO MARTINGALES WITH AN APPLICATION IN FINANCE}.
  UChicago,
  2022.
\bibitem{sheffield}
    Sheffield,
    \emph{Martingales, risk neutral probability, and Black-Scholes option pricing}.
    MIT.
\bibitem{samy}
    Dr. Samy Tindel,
    \emph{Martingales}.
    Purdue University.
\bibitem{}
    David Williams, 
    \emph{Probability with Martingales}.
    Cambridge University Press,
    1991.
\end{thebibliography}
\end{document}